\def\leq {\leqslant}
\def\var{\operatorname{Var}}
\def\le {\leqslant}
\def\ge {\geqslant}
\def\geq {\geqslant}
\DeclareMathOperator*{\supess}{sup\,ess}
\DeclareMathOperator*{\infess}{inf\,ess}
\newcommand{\norm}[1]{ \left\lVert#1\right\rVert}
\providecommand{\U}[1]{\protect\rule{.1in}{.1in}}
\theoremstyle{plain}
\newtheorem{theorem}{Theorem}[section]
\newcommand\h[1]{\mkern2mu\widehat{\mkern-2mu#1}}
\newcommand\R{\mathbb{R}}
\newcommand\N{\mathbb{N}}
\newtheorem{corollary}[theorem]{Corollary}
\theoremstyle{remark}
\newtheorem{remark}[theorem]{Remark}
\numberwithin{equation}{section}
\begin{document}
\title[Note on Fourier inequalities]
{
Note on Fourier inequalities
}

\author{Miquel Saucedo}
\address{M. Saucedo,
 Centre de Recerca Matem\`{a}tica\\
Campus de Bellaterra, Edifici C
08193 Bellaterra (Barcelona), Spain;
Universitat Aut\`{o}noma de Barcelona, Campus de Bellaterra, Edifici C
08193 Bellaterra (Barcelona), Spain.}

\email{miquelsaucedo98@gmail.com}

\author{Sergey Tikhonov}
\address{S. Tikhonov,
ICREA, Pg. Llu\'{i}s Companys 23, 08010 Barcelona, Spain\\
Centre de Recerca Matem\`{a}tica\\
Campus de Bellaterra, Edifici C
08193 Bellaterra (Barcelona), Spain,
 and Universitat Aut\`{o}noma de Barcelona,
  Campus de Bellaterra, Edifici C
08193 Bellaterra (Barcelona), Spain
 .}
\email{ stikhonov@crm.cat}
\thanks{M. Saucedo is supported by  the Spanish Ministry of Universities through the FPU contract FPU21/04230.
 S. Tikhonov is supported
by PID2020-114948GB-I00, 2021 SGR 00087, AP 14870758,
the CERCA Programme of the Generalitat de Catalunya, and Severo Ochoa and Mar\'{i}a de Maeztu
Program for Centers and Units of Excellence in R\&D (CEX2020-001084-M)
}
\subjclass[2010]{Primary  42B10; Secondary
46E30, 26D10.}
\keywords{Fourier transform, variable Lebesgue space, Weighted Fourier inequalities, Hardy--Littlewood type theorem}

\begin{abstract}
We prove that
  the Hausdorff--Young
  inequality
$\|{\widehat{f}}\|_{q(\cdot)}
 \leq C \norm{f}_{p(\cdot)}$
 with
$q(x)=p'(1/x)$ and $p(\cdot)$ even and non-decreasing
  holds in  variable Lebesgue spaces if and only if $p$ is a constant.
 However, under the additional condition on monotonicity of   $f$,
we obtain a full characterization of   Pitt-type  weighted Fourier inequalities
  in the classical and variable Lebesgue setting.
\end{abstract}
\maketitle

\section{Fourier inequalities in variable $L_p$ spaces}

For $f\in L_1(\R^n)$ we define the Fourier transform by

	\[
	\widehat{f}(y)= \int_{\R^n} f(x)e^{-2\pi i(x,y)}\, dx.
	\]
An important estimate  for the Fourier transform   is the Hausdorff--Young
inequality: for $1\le p \le 2$,
  \begin{equation}
    \label{hy}
  \|{\widehat{f}}\|_{p'} \leq \|{f}\|_{p},
    \end{equation}
where $p'$ is the conjugate exponent for $p$, that is,  $\frac1p+\frac1{p'}=1$.
A natural question is to obtain an analogue of the Hausdorff--Young inequality for  the variable Lebesgue
spaces \cite[Section 5.6.10]{cruz}.

Let $f$ and $p$ be measurable functions. Assume further that
$1\le p(x) \le \infty$. We set
$\rho_p(f)=\int_{\mathbb{R}} |f(x)|^{p(x)} dx$ and define $$
\|{{f}}\|_{p(\cdot)}=\inf\Big\{\lambda>0: \rho_p(f/\lambda)\le 1\Big\}.  
$$
A typical regularity condition on $p$ is the
log-Hölder continuity, i.e, that there exists a constant
$C$ such that for all $x,  y\in \R$,  $|x -y| < 1/2$,
$|p(x)-p(y)|\le \frac{C}{-\log (|x -y|)}.$
If
$p(x)=\infty$ we assume that there is $C>0$ such that  
$\frac1{p(y)}\le \frac{C}{-\log (|x -y|)}$
for  $|x -y| < 1/2$.

In \cite[Question 5.61]{cruz}, it was proved that a natural extension of (\ref{hy}), namely,
$$
  \|{\widehat{f}}\|_{p'(\cdot)} \leq C \|{f}\|_{p(\cdot)},\qquad \frac1{p(\cdot)}+\frac1{p'(\cdot)}=1,
$$
does not hold in general and the following problem was posed:

{\it
Let $p(\cdot)$ be even and non-decreasing on $\R_+$ such that 
 \begin{equation}  \label{cond}
 1\le \infess\limits_{x\in \R} p(x)\le \supess\limits_{x\in \R} p(x) \le 2.
  \end{equation}
Is it the case that there exists a constant $C > 0$ such
that for any $ f\in L_{p(\cdot)}$
$$
  \|{\widehat{f}}\|_{q(\cdot)} \leq C \|{f}\|_{p(\cdot)}\qquad\mbox{with}\quad q(x)=p'(1/x)?
$$
}
We give
a negative answer to this question by demonstrating that such an inequality is valid  only in the classical, non-variable case.

\begin{theorem}
Let  $p$ be a  continuous even non-decreasing function on $\R_+$ such that $$\lim_{x\to \infty} p(x)=p_\diamond\le 2$$
and condition (\ref{cond}) holds. Let $q$ be log-Hölder continuous.  If  the Hausdorff--Young inequality
  \begin{equation}
    \label{hy-}
\|{\widehat{f}}\|_{q(\cdot)}
 \leq C \norm{f}_{p(\cdot)}
    \end{equation}
     holds for any  Schwartz function $f$, then $p(x)\leq p_\diamond \leq q'(x)$. In particular, if $q(x)=p'(1/x)$, then $p(x)=p_\diamond$.
\end{theorem}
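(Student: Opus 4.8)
\emph{Proof proposal.} The first inequality $p(x)\le p_\diamond$ carries no content: since $p$ is non-decreasing on $\R_+$ with $\lim_{x\to\infty}p(x)=p_\diamond$, its supremum equals $p_\diamond$, so $p(x)\le p_\diamond$ for every $x$, and by evenness this holds on all of $\R$. Thus the whole theorem reduces to the bound $q'(x_0)\ge p_\diamond$ for each fixed $x_0\in\R$. Granting this, the ``in particular'' statement is immediate: when $q(x)=p'(1/x)$ one computes, for $x>0$, that $1/q'(x)=1-1/q(x)=1-1/p'(1/x)=1/p(1/x)$, i.e.\ $q'(x)=p(1/x)$, so the bound just claimed together with $p\le p_\diamond$ gives $p_\diamond\le p(1/x)\le p_\diamond$. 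Hence $p(1/x)=p_\diamond$ for all $x>0$, and letting $1/x$ range over $\R_+$ shows $p\equiv p_\diamond$.

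To prove $p_\diamond\le q'(x_0)$ the plan is to test \eqref{hy-} against modulated, widely spread bumps whose Fourier transforms concentrate at $x_0$. Fix a Schwartz function $\phi$ with $\phi(0)=1$ and, for small $\delta>0$, set $f_\delta(x)=e^{2\pi i x_0 x}\phi(\delta x)$. Then $|f_\delta(x)|=|\phi(\delta x)|$ is essentially a plateau of height $1$ on $|x|\lesssim 1/\delta$, whereas $\widehat{f_\delta}(y)=\delta^{-1}\widehat{\phi}\bigl((y-x_0)/\delta\bigr)$ is a bump of height $\delta^{-1}$ and width $\delta$ centered at $x_0$. I would then let $\delta\to 0^{+}$ and compare the two sides of \eqref{hy-}. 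For the Fourier side, restricting the norm to the interval $|y-x_0|<\delta$ (which only decreases it) and using the continuity of $q$, so that $q(y)=q(x_0)+o(1)$ there, a change of variables yields the lower bound $\|\widehat{f_\delta}\|_{q(\cdot)}\gtrsim \delta^{-1/q'(x_0)}$.

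The spatial side is where the argument must work. Although $p$ varies over the wide support of $f_\delta$, the bulk of that support lies where $p$ is close to $p_\diamond$, and this is what governs the Luxemburg norm. Fix $\epsilon>0$ and choose $T=T_\epsilon$ with $p(x)>p_\diamond-\epsilon$ for $x>T$. Splitting $\rho_p(f_\delta/\lambda)=2\int_0^{1/\delta}\lambda^{-p(x)}\,\diff x$ (up to the rapidly decaying tails of $\phi$) at $x=T$ and taking $\lambda\asymp \delta^{-1/(p_\diamond-\epsilon)}$, the tail contributes $O(1)$ while the piece over $[0,T]$ is $O(T\,\delta^{\,p(0)/(p_\diamond-\epsilon)})=o(1)$; hence $\|f_\delta\|_{p(\cdot)}\lesssim_\epsilon \delta^{-1/(p_\diamond-\epsilon)}$. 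Feeding both estimates into \eqref{hy-} gives $\delta^{-1/q'(x_0)+1/(p_\diamond-\epsilon)}\lesssim_\epsilon C$ uniformly in $\delta$; letting $\delta\to 0^{+}$ forces the exponent to be nonnegative, i.e.\ $q'(x_0)\ge p_\diamond-\epsilon$, and finally $\epsilon\to 0$ yields $q'(x_0)\ge p_\diamond$, as required.

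The \textbf{main obstacle} is precisely the spatial-side estimate: one is computing the norm of a function spread over a region on which $p$ sweeps the entire range $[p(0),p_\diamond]$, and one must confirm that the small-$x$ region, where the exponent is as small as $p(0)$, contributes negligibly so that the effective exponent is $p_\diamond$ (this is what produces the $1/(p_\diamond-\epsilon)$ power and, in the limit, the sharp constant $p_\diamond$). By contrast the Fourier-side concentration estimate and the deduction of the ``in particular'' case are routine, the former needing only the continuity of $q$ at the single point $x_0$ supplied by log-Hölder regularity.
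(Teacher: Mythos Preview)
Your argument is correct and parallels the paper's: both reduce to comparing exponents by testing against modulated dilated bumps whose Fourier transforms localize near the target point, and both use the log-H\"older condition on $q$ to extract the Fourier-side exponent. The one substantive difference is on the spatial side. The paper first observes that since $|\widehat{\tau_h f}|=|\widehat f|$, inequality \eqref{hy-} yields $\|\widehat f\|_{q(\cdot)}\le C\|\tau_h f\|_{p(\cdot)}$, and then shows by dominated convergence that $\|\tau_h f\|_{p(\cdot)}\to\|f\|_{p_\diamond}$ as $h\to\infty$; this upgrades \eqref{hy-} to $\|\widehat f\|_{q(\cdot)}\le C\|f\|_{p_\diamond}$, so the subsequent dilation test is carried out in the \emph{constant}-exponent space $L_{p_\diamond}$ and produces the exact power $\lambda^{1/p_\diamond'}$. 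You instead bound $\|f_\delta\|_{p(\cdot)}$ directly in the variable space via the split at $T_\epsilon$, obtaining $\delta^{-1/(p_\diamond-\epsilon)}$ and sending $\epsilon\to 0$ at the end. The paper's translation trick is a bit slicker (no $\epsilon$-slack), while your direct estimate avoids the auxiliary limit; both are valid.

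Two small remarks. On the Fourier side you write ``using the continuity of $q$'': mere continuity is not enough to keep $\delta^{(q(x_0)-q(y))/q(x_0)}$ bounded below on $|y-x_0|<\delta$ as $\delta\to 0$; the log-H\"older bound $|q(y)-q(x_0)|\le C/(-\log\delta)$ is genuinely what makes that factor $\asymp 1$. And the case $q(x_0)=\infty$ (so $q'(x_0)=1$, forcing $p_\diamond=1$) needs a short separate argument, which the paper supplies.
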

\begin{remark}
\label{remark:modular}

It is worth mentioning that the following non-classical
Fourier inequality
$$\int_{\R}|{\widehat{f}}(\xi)|^{q(\xi)}d\xi
 \leq C_1
 \int_{\R}|{{f}}(x)|^{p(x)}dx+C_2
 $$
holds,
with positive constants $C_1$ and $C_2$ that depend on $p$ and $q$ but not on $f$, under some additional conditions on $p(\cdot)$ and $q(\cdot)$. See
\cite[Corollary 1.16]{cruz2}.
\end{remark}

\begin{proof}
First, since $p(x)\leq p_\diamond$ it is enough to show that $q'(x)\geq p_\diamond$ or, equivalently, $q(x)\leq p_\diamond'$. Furthermore, by the modulation property of the Fourier transform, it suffices to show the result for $x=0$, that is,
 $$
 q(0)\leq p_\diamond'.
 $$
  Second, assuming that (\ref{hy-})
 holds for  any Schwartz function,  
we  have, for any such $f$,
  \begin{equation}
    \label{hy--}\|{\widehat{f}}\|_{q(\cdot)} \leq C \norm{f}_{p_\diamond}.
     \end{equation}
Indeed,
letting 
 $\tau_h f (x)= f(x-h)$, we get $$\|{\widehat{f}}\|_{q(\cdot)}  \leq C \norm{\tau_h f}_{p(\cdot)}.$$
 To verify (\ref{hy--}), we will  show that
  \begin{equation}
    \label{hy--+}\lim_{h\to\infty} \norm{\tau_h f}_{p(\cdot)}=\norm{f}_{p_\diamond}.
     \end{equation}
Noting that
$$\rho_p\Big(\frac{\tau_h f}{\norm{f}_{p_\diamond}}\Big)=\int_{\mathbb{R}} \Big(\frac{|f(x)|}{\norm{f}_{p_\diamond}}\Big)^{p(x+h)} dx$$
and since $p$ is a bounded function,
the dominated convergence theorem yields
$$\lim_{h\to\infty} \rho_p\Big(\frac{\tau_h f}{\norm{f}_{p_\diamond}}\Big)=\int_{\mathbb{R}} \Big(\frac{|f(x)|}{\norm{f}_{p_\diamond}}\Big)^{p_\diamond}=1.$$
Thus,
$$\lim_{h\to\infty}\rho_p\Big(\frac{\tau_h f}{\norm{f}_{p_\diamond}}\Big)=1,$$ which implies (\ref{hy--+}), due to the fact that
$p(x)\ge1$ and
if $1-\varepsilon<\rho_p\Big(\frac{\tau_h f}{\norm{f}_{p_\diamond}}\Big)< 1+ \varepsilon,$ then
$\rho_p\Big(\frac{\tau_h f}{(1+\varepsilon)\norm{f}_{p_\diamond}}\Big)<1<\rho_p\Big(\frac{\tau_h f}{(1-\varepsilon)\norm{f}_{p_\diamond}}\Big)$.

Third, assuming that $q$ is log-Hölder continuous at the origin and  that inequality  (\ref{hy--}) holds, we prove that  $q(0)\leq p_\diamond'$.
For a Schwartz function $f$  satisfying $\widehat{f}(\xi) \geq 1$ for $|\xi|\leq 1$, we define $$f_\lambda(x)=\lambda f(\lambda x).$$
Then,
$\norm{f_\lambda}_{p_\diamond}= \lambda^{1/p_\diamond'} \norm{f}_{p_\diamond}.$
 We consider two cases. If $q(0)<\infty$, $$\rho_q(C\lambda^{-\frac{1}{q(0)}} \widehat{f_\lambda}) > \int_{-\lambda}^{\lambda} |C\lambda^{-\frac{1}{q(0)}} \widehat{f}(\xi/\lambda)|^{q(\xi)} d\xi>
 \lambda^{-1}\int_{-\lambda}^{\lambda} C^{q(\xi)}\lambda^{1-\frac{q(\xi)}{q(0)}} d\xi.$$
Hence, by log-Hölder continuity, for some $C$ we have
$\lim\limits _{\lambda \to 0} \rho_q(C\lambda^{-\frac{1}{q(0)}} \widehat{f}_\lambda) \geq 1.
$
Therefore, we deduce that
$$ \lim_{\lambda \to 0} \big\|\widehat{f_\lambda}\big\|_{q(\cdot)} \lambda^{-\frac{1}{q(0)}}\geq \frac1{C}
$$
and
$$
\lambda^{\frac1{p_\diamond'}} \norm{f}_{p_\diamond}
=
\big\|{f_\lambda}\big\|_{p_\diamond}
\gtrsim
\big\|\widehat{f_\lambda}\big\|_{q(\cdot)} \gtrsim \lambda^{\frac{1}{q(0)}},$$
that is, $q(0)\leq p_\diamond'$.

Finally, we assume that $q(0)=\infty$. If for any $\lambda>0$ the set $\{-\lambda<x<\lambda: q(x)=\infty\}$ has positive measure, we deduce that for any $\lambda>0$, $\rho_q(\widehat{f}_\lambda)\geq 1$ and, consequently, $
\big\|\widehat{f}_\lambda\big\|_{q(\cdot)}
\geq 1$.
Then the Hausdorff--Young inequality
      \eqref{hy-} implies that  $p_\diamond'=\infty$.

Otherwise, by the log-Hölder continuity of $q$, for large enough $C$,
we derive as above that
$$\lim _ {\lambda \to 0 }\rho_q(C\widehat{f_\lambda} ) \geq \int_{-\lambda}^{\lambda} | C|^{q(\xi)} d\xi \geq 1,
$$
whence it follows that  $p_\diamond=1$.

\end{proof}

\section{Hardy--Littlewood theorem in the variable exponent setting }
We now show that some nontrivial Fourier inequalities     
 can still be derived in the variable exponent setting  assuming additional conditions on $f$.
We obtain an analogue of the classical Hardy--Littlewood inequality (see  \cite{tit, sad}) 
\begin{equation}
\label{hl inequality}
\|\h{f}(\xi)|\xi|^{n(1-\frac2p)}\|_p\lesssim\|f\|_p, \qquad 1<p\le 2,
\end{equation}
for radial monotone functions; cf. \cite{  car, jam, alberto1}.
For a radial function $g$,  we denote by $g_0$ its radial part: 
 $g(x)=g_0(|\xi|)$.
\begin{theorem}
Let $n\in \N$ and $p(\cdot)$ be a radial function such that $p_0(\cdot)\ge 1$ is  decreasing and bounded at the origin. The inequality
    $$\int_{\mathbb{R}^n} |\h{f}(\xi) |^{p(\xi)} |\xi|^{n(p(\xi)-2)} v\big(1/{|\xi|}\big) \,d\xi
 \lesssim
 \int_{\mathbb{R}^n} |{f}(x) |^{p(1/{|x|})} v(|x|) \,dx
$$
holds for any radial non-negative function $f$  such that $f_0(x) x^{n-1}$ is non-increasing with $$\lim _{x \to 0} f_0(x) x^{n-1} \leq 1$$
if and only if, for any positive $r,$
\begin{equation}\label{2.1}
    \int_{|x|\geq r} r^{p(1/|x|)}|x|^{-np(1/|x|)} v(|x|) d x  \lesssim \int_{|x|\leq r} v(|x|) |x|^{(1-n) p(1/|x|)} dx.
\end{equation}
\end{theorem}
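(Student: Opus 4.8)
The plan is to pass to polar coordinates and reduce the statement to a one–dimensional modular Hardy inequality for a monotone function. Writing $g(t):=f_0(t)\,t^{n-1}$, the hypothesis says that $g$ is non-increasing with $0\le g\le 1$. The key input is the pointwise equivalence, valid for every radial $f\ge0$ whose profile $g$ is non-increasing,
\[
\h{f}_0(s)\approx\int_0^{1/s} f_0(t)\,t^{n-1}\,dt=\int_0^{1/s} g(t)\,dt,
\]
which follows from the classical analysis of Fourier transforms of radial monotone functions (cf. \cite{car,jam,alberto1}): the lower bound is immediate from $f\ge0$ and the positivity near the origin of the kernel $z\mapsto z^{-(n-2)/2}J_{(n-2)/2}(z)$ and needs no monotonicity, while the upper bound follows by splitting $\int_0^\infty=\int_0^{1/s}+\int_{1/s}^\infty$ and controlling the oscillatory tail through the second mean value theorem, where the monotonicity of $g$ enters. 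Setting $P(u):=p_0(1/u)$ and $w(u):=v(u)\,u^{n-1}$, and noting that $P$ is non-decreasing (as $p_0$ is decreasing) and bounded, $1\le P\le P_{\max}:=p_0(0^+)<\infty$ (since $p_0$ is bounded at the origin), the substitution $u=1/s$ rewrites the asserted inequality as
\[
\int_0^\infty F(u)^{P(u)} u^{-nP(u)} w(u)\,du\lesssim \int_0^\infty g(u)^{P(u)} u^{(1-n)P(u)} w(u)\,du,\quad F:=\textstyle\int_0^{\cdot} g,
\]
to hold for all non-increasing $g$ with $0\le g\le1$, while \eqref{2.1} becomes its companion testing inequality $\int_r^\infty r^{P(u)} u^{-nP(u)} w(u)\,du\lesssim\int_0^r u^{(1-n)P(u)} w(u)\,du$.

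For the necessity of \eqref{2.1} I would test with $g=\chi_{(0,r)}$, that is, $f_0(t)=t^{1-n}\chi_{(0,r)}(t)$, which is admissible because $g$ is non-increasing and $\lim_{t\to0}g(t)=1\le1$. Then $F(u)=\min(u,r)$, the right-hand side of the reduced inequality equals $\int_0^r u^{(1-n)P(u)} w(u)\,du$, and its left-hand side is at least $\int_r^\infty r^{P(u)} u^{-nP(u)} w(u)\,du$; comparing the two yields exactly \eqref{2.1}.

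The sufficiency is the crux, and it cannot be obtained by soft arguments: both sides of the reduced inequality are convex functionals of $g$ whose extreme points are precisely the functions $\chi_{(0,r)}$, so the fact that \eqref{2.1} encodes the inequality on these extreme points does not, by convexity alone, propagate to all admissible $g$. Instead I would decompose $(0,\infty)$ into the level sets $E_m=\{u: g(u)\approx 2^{-m}\}$ of the non-increasing function $g$; on each $E_m$ one has $g\approx2^{-m}$ and $F$ essentially constant, so that expanding $F(u)^{P(u)}=\big(\sum_{k\le m}\int_{E_k}g\big)^{P(u)}$ reduces matters to a double sum over the blocks. Two structural facts then drive the estimate. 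First, the uniform bound $1\le P\le P_{\max}$ permits distributing the $P(u)$-th power across the sum by Jensen's inequality against geometric weights $\lambda_k\approx2^{-k\delta}$, at the cost of a $u$-independent constant and a geometric factor $2^{k\delta(P_{\max}-1)}$ that the summation reabsorbs. Second, since $0\le g\le 1$ the bases $2^{-k}$ are $\le1$, so the monotonicity of $P$ lets me replace the exponent $P(u)$ on a later block by its smaller value on the earlier block $E_k$ in the correct direction. Reorganizing the double sum and applying the testing inequality \eqref{2.1} at the breakpoints of the decomposition then collapses the left-hand side into the right-hand side. The main obstacle is exactly this control of the variable exponent inside the Hardy operator: the scaling and duality (Sawyer-type) techniques that settle the constant-exponent Hardy inequality for monotone functions are unavailable here, and one must exploit the boundedness and monotonicity of $P$ together with the normalization $g\le1$ to sum the geometric pieces uniformly and to recognize \eqref{2.1} as the sharp testing condition; the naive alternative of writing $g=\int\chi_{(0,\sigma)}\,d\mu(\sigma)$ and invoking Jensen is too lossy, since it only produces $\int g\,u^{(1-n)P}w$ on the right, which is strictly weaker than the required $\int g^{P}u^{(1-n)P}w$ precisely because $g\le1$ and $P\ge1$.
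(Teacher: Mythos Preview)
Your reduction to a one–dimensional modular Hardy inequality and your necessity argument coincide with the paper's: both pass to polar coordinates, invoke the upper bound $|\h f_0(s)|\lesssim\int_0^{1/s}f_0(t)t^{n-1}\,dt$ for radial $f$ with $f_0(t)t^{n-1}$ non-increasing (this is Theorem~\ref{estimate} in the paper), and test with $f_0(t)=t^{1-n}\chi_{(0,r)}(t)$ to obtain \eqref{2.1}. One minor imprecision: your two–sided estimate $\h f_0(s)\approx\int_0^{1/s}f_0 t^{n-1}\,dt$ is not true in general for merely non-negative $f$; only the upper bound holds, and that is all that is actually needed (the lower bound you use for necessity is valid because the test function is supported in $(0,r)$ and you evaluate at $|\xi|\le c/r$, keeping the Bessel kernel positive).

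The genuine divergence is in the sufficiency. The paper does \emph{not} argue directly: after rewriting the left-hand side via the Hardy average $Hg(u)=\frac1u\int_0^u g$, it invokes Neugebauer's characterization \cite[Theorem~3]{neu} of the modular inequality
\[
\int_0^\infty (Hg)^{P(u)}\,\omega(u)\,du\lesssim\int_0^\infty g^{P(u)}\,\omega(u)\,du
\]
restricted to non-increasing $g\le 1$, and then checks that \eqref{2.1} is exactly Neugebauer's testing condition for the weight $\omega(u)=u^{(1-n)P(u)}w(u)$. So the paper's sufficiency is a one-line citation.

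Your proposal instead outlines a self-contained proof of this modular Hardy inequality via a dyadic level-set decomposition of $g$. You correctly diagnose why the naive layer-cake plus Jensen fails (it yields $\int g\,\omega$ on the right, not $\int g^{P}\omega$). However, the step you describe as ``reorganizing the double sum and applying the testing inequality at the breakpoints'' is precisely where all the work lies, and it is not carried out. In particular, after Jensen with geometric weights the terms are not $2^{-kP(u)}$ but $\big(2^{-k}|E_k|/u\big)^{P(u)}$ (or similar), and while $|E_k|/u\le 1$ for $u$ in a later block, you still have to show that summing $\int_{E_m}\big(2^{-k}|E_k|/u\big)^{P(u)}\omega(u)\,du$ over $k\le m$ and $m$ is dominated by $\sum_k\int_{E_k}2^{-kP(u)}\omega(u)\,du$; this is where \eqref{2.1} must enter, and the bookkeeping with the variable exponent and the extra geometric factor $2^{k\delta(P_{\max}-1)}$ is delicate and not addressed. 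In effect you would be reproving (a special case of) Neugebauer's theorem, which is legitimate but nontrivial; as written, the sufficiency part of your proposal is a plausible plan rather than a proof.
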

\begin{proof}
First, we note that under the  conditions on $f$, the Fourier transform is well defined; cf.
 Remark \ref{remark:well-def}.

To see the necessity of condition \eqref{2.1}, we
set $f(x)=\chi(|x|)_{[0,r]} |x|^{-n+1}$.
Then, for $\xi \leq \frac{\pi}{3} r^{-1}$ we have
$\widehat{f}(\xi)\gtrsim r$. Hence,
    $$ \int_{|\xi|\leq \frac{\pi}{3} r^{-1}} r^{p(\xi)}|\xi|^{n(p(\xi)-2)} v(|\xi|^{-1}) d \xi = \int_{|\xi|\geq \frac{3}{\pi}r} r^{p(\xi^{-1})}|\xi|^{-np(\xi^{-1})} v(|\xi|) d \xi  \lesssim \int_{|x|\leq r} v(x) |x|^{p(\xi^{-1})(1-n)} dx.$$

We now prove the sufficiency of condition \eqref{2.1}. Using Theorem \ref{estimate}, we have
    \begin{eqnarray*}
  J&=&  \int_{\mathbb{R}^n} |\h{f}(\xi) |^{p(\xi)} |\xi|^{n(p(\xi)-2)} v(|\xi|^{-1}) \,d\xi \\
  &\lesssim&
     \int_0 ^\infty \xi ^{n(p_0(\xi)-2)+n-1} v(\xi^{-1}) \left(\int_0 ^{\xi^{-1}} f_0(t) t^{n-1} dt  \right)^{p_0(\xi)} d \xi
  \\ &=&
 \int_0 ^\infty \xi ^{-n(p_0(1/\xi)-2)-n-1} v(\xi) \left(\int_0 ^{\xi} f_0(t) t^{n-1} dt  \right)^{p_0(1/\xi)} d \xi.
 \end{eqnarray*}
 Rewriting this  using
  the Hardy operator $H(g)(x)=\frac1x\int_0^x g(t)dt$, we obtain
 \begin{eqnarray*}
J &\lesssim& \int_0 ^\infty
\xi ^{(n-1)(1-p_0(\xi^{-1}))} v(\xi)
\Big(
H(f_0(t)t^{n-1})\Big)^{p_0(\xi^{-1})} (\xi) d \xi  \\ &\lesssim&
 \int_0 ^\infty \xi ^{(n-1)(1-p_0(\xi^{-1}))} v(\xi)
 \Big(f_0(\xi)\xi^{n-1}\Big)^{p_0(\xi^{-1})} (\xi) d \xi
  \\ &=&\int_0 ^\infty v(\xi) \xi^{n-1} f_0(\xi)^{p_0(\xi^{-1})}\asymp  \int_{\mathbb{R}^n} |{f}(x) |^{p(x^{-1})} v(|x|) \,dx.
 \end{eqnarray*}
Here the second inequality follows from the characterization of
 the boundedness of the Hardy operator for weighted modulars restricted to decreasing functions, as obtained in    \cite[Theorem 3]{neu}. It is a routine calculation to see that \eqref{2.1} is the required condition.

\end{proof}

\vspace{0.1cm}
\section{Fourier inequalities for monotone functions in the classical $L_p$}
First, we recall that for radial $f$, its Fourier transform is given by
  \begin{equation}\label{def-fourier}
    \widehat{f}(|\xi|) = (2 \pi)^{n/2} |\xi|^{1-n/2
    }
\int_{0}^{\infty}r^{n/2-1}f(r)J_{n/2-1}(|\xi|r)r\mathrm{d}r,
\end{equation}
where $J_\alpha(z)$ is the  Bessel function of the first kind of order $\alpha.$
Our goal is to characterize  weighted Pitt's
inequality restricted to radially  decreasing functions. First,
we  obtain the following upper bound for the Fourier transform of such functions: \begin{theorem}\label{estimate}
 Let $f$ be a radial function in $\R^n.$
  Assume that $r^{\frac{n-1}{2}}f_0(r)$ is a non-increasing function with limit zero. Then we have
  \begin{equation}\label{bound for fourier}|\widehat{f}(|\xi|)|\lesssim \int_0 ^{|\xi|^{-1}} r^{n-1} f_0(r) dr.
  \end{equation}
\end{theorem}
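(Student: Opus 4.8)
The plan is to start from the radial representation \eqref{def-fourier}. Writing $t=|\xi|$ and $\alpha=\tfrac n2-1$, and absorbing the factor $r$, the transform equals, up to the constant $(2\pi)^{n/2}$,
\[
\widehat f(t)=(2\pi)^{n/2}\,t^{1-n/2}\int_0^\infty r^{n/2}f_0(r)\,J_\alpha(tr)\,dr .
\]
I would split this integral at $r=1/t$, the point where the Bessel argument $tr$ crosses $1$, and treat the two pieces with the two regimes of the Bessel asymptotics. On the near region $\{tr\le 1\}$ I use the power bound $|J_\alpha(z)|\lesssim z^{\alpha}$ (for $n=1$ this reads $|J_{-1/2}(z)|\lesssim z^{-1/2}$). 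Since $r^{n/2}(tr)^{\alpha}=t^{n/2-1}r^{n-1}$, after multiplying by the prefactor $t^{1-n/2}$ the near-region contribution is bounded, with no loss, by exactly $\int_0^{1/t}r^{n-1}f_0(r)\,dr$, which is the desired right-hand side.

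For the far region $\{tr>1\}$ I would set $g(r):=r^{(n-1)/2}f_0(r)$, which the hypothesis forces to be non-negative, non-increasing, with limit $0$, and write $r^{n/2}f_0(r)=r^{1/2}g(r)$. Inserting the oscillatory asymptotics $J_\alpha(z)=\sqrt{2/(\pi z)}\cos(z-\theta)+\dots$, with $\theta=\alpha\pi/2+\pi/4$, the leading term reduces the integral to $\sqrt{2/(\pi t)}\int_{1/t}^\infty g(r)\cos(tr-\theta)\,dr$. Here Bonnet's second mean value theorem, applied to the monotone $g$, bounds the oscillatory integral by $g(1/t)\cdot\tfrac2t$, so that, after the prefactor, this term is $\lesssim t^{-3/2}g(1/t)=t^{-n}f_0(1/t)$.

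The main obstacle lies precisely in this far region: one cannot simply use $|J_\alpha(z)|\lesssim z^{-1/2}$, since that would leave $\int_{1/t}^\infty r^{(n-1)/2}f_0(r)\,dr=\int_{1/t}^\infty g(r)\,dr$, which need not converge under the stated hypotheses; the oscillation of $J_\alpha$ must be exploited. The subtlety is that the error in the Bessel asymptotics is not absolutely integrable either, so I would expand one order further: the $O((tr)^{-1})$ correction carries the weight $g(r)/r$, which is again non-increasing, so Bonnet applies and gives the same size $t^{-n}f_0(1/t)$; only the final $O((tr)^{-2})$ remainder is genuinely integrable at infinity, and bounding $g(r)\le g(1/t)$ there leaves a harmless $\int_{1/t}^\infty r^{-3/2}\,dr$, again of order $t^{-n}f_0(1/t)$.

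Finally I would close with the elementary monotonicity estimate $s^{n}f_0(s)\lesssim\int_0^s r^{n-1}f_0(r)\,dr$: since $g$ is non-increasing, $f_0(r)\ge (s/r)^{(n-1)/2}f_0(s)$ for $r\le s$, and integrating $r^{n-1}$ against this lower bound produces a fixed multiple of $s^{n}f_0(s)$ (the exponents combine to $r^{(n-1)/2}$, which is integrable on $[0,s]$). Taking $s=1/t$ shows the far-region contribution $t^{-n}f_0(1/t)$ is itself dominated by $\int_0^{1/t}r^{n-1}f_0(r)\,dr$, and combining this with the near-region bound yields \eqref{bound for fourier}.
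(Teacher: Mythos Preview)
Your argument is correct and is essentially the alternative route the paper itself flags in Remark~\ref{remark:well-def}\,($iii$). The paper's own proof handles the far region differently: after scaling to $|\xi|=1$ it writes $\int_1^\infty r^{1/2}J_{n/2-1}(r)\,g(r)\,dr$, integrates by parts against the monotone measure $d\bigl(g(s)\bigr)$, and reduces everything to the uniform bound $\sup_x\bigl|\int_0^x t^{1/2}J_\alpha(t)\,dt\bigr|<\infty$, which is then established by two further integrations by parts using the exact recurrence $\int_0^x t^{\alpha+1}J_\alpha(t)\,dt=x^{\alpha+1}J_{\alpha+1}(x)$ rather than any asymptotic expansion. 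Your approach is more transparent about how the oscillation is actually being exploited; the paper's has the advantage of never needing to track the error terms in~\eqref{as}. The closing monotonicity estimate $s^{n}f_0(s)\lesssim\int_0^{s}r^{n-1}f_0(r)\,dr$ is the same in both arguments.

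Two cosmetic slips worth cleaning up: the chain ``$t^{-3/2}g(1/t)=t^{-n}f_0(1/t)$'' only becomes an equality once the outer factor $t^{1-n/2}$ is included as well, and in the remainder step the leftover integral is $\int_{1/t}^\infty r^{-2}\,dr$ rather than $r^{-3/2}$. Neither affects the validity of the argument.
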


\begin{remark}
\label{remark:well-def}

\begin{itemize}
\item[($i$)] Note that under the conditions of Theorem \ref{estimate}
the Fourier transform can be defined by
\eqref{def-fourier} as
$\lim_{N\to \infty}\int_{0}^{N}$.

\item[($ii$)]
 Estimate \eqref{bound for fourier} is not valid if we only assume in Theorem \ref{estimate} that
$f$ is radial (take a characteristic function of the ball).
Moreover, it is also clear that the monotonicity condition  is optimal. Indeed, assuming that
$r^{\frac{n-1}{2}} f_0(r) r^{-\varepsilon}$, with a positive $\varepsilon$,
is non-increasing with limit zero,
we observe that the integral in
\eqref{def-fourier}
does not even need to be convergent.
This easily follows from  the well-known asymptotics
\begin{equation}\label{as}
J_{\nu}(y)= \sqrt{\frac{2}{\pi}}\cos(y- \theta_\nu)y^{-\frac{1}{2}}  + B_\nu \sin(y- \theta_\nu)y^{-\frac{3}{2}} + O(y^{-\frac{5}{2}}),
\end{equation}
see \cite[Ch. 7]{watson}.
 \item[($iii$)]
 It is worth mentioning
 that one can alternatively prove Theorem \ref{estimate} with the help of the formula (\ref{as}).
  \item[($iv$)]
Note also that Theorem \ref{estimate} extends the   result by Carton-Lebrun and Heinig  in \cite{car}, which claims that
\eqref{bound for fourier} holds if
$r^{n-1}f_0(r)$ is non-increasing. Moreover, inequality
\eqref{bound for fourier} improves the following
 result
 by
Colzani,  Fontana, and Laeng recently proved in \cite{colzani}:  under conditions of Theorem \ref{estimate}, there holds
$$|\widehat{f}(|\xi|)|\lesssim
\frac1{|\xi|^\frac{n-1}2}
\int_0 ^{|\xi|^{-1}} r^\frac{n-1}2 f_0(r) dr.$$
\end{itemize}
\end{remark}
\begin{proof}
First note that it is enough to show that
$$
|\widehat{f}(1)|\lesssim \int_0 ^1 f_0(r) r^{n-1} dr,$$
as the general result will follow by considering  $x \mapsto f(\lambda x)$ instead of $f$.

 We will make use of following well-known properties of the Bessel functions $J_\alpha(t)$, for $\alpha\ge -\frac12$ and $t>0$,
    \begin{enumerate}
        \item $\int_0 ^t r^{\alpha+1} J_\alpha (r) dr =t^{\alpha+1}J_{\alpha+1}(t);$
        \item $|J_\alpha(t)|\lesssim t^{-\frac{1}{2}};$
        \item $|J_\alpha(t)|\lesssim t^\alpha.$
    \end{enumerate}
In light of property (3) and by integration by parts, we arrive at
   \begin{eqnarray*}
|\widehat{f}(1)|\lesssim\int_{0}^{\infty}r^{n/2-1}f_0(r)J_{n/2-1}(r)r\mathrm{d}r &\lesssim& \int_0^1 r^{n-1}f_0(r) dr + \int_1 ^\infty  r^{\frac{1}{2}}J_{\frac{n}{2}-1}(r)\int_r^\infty   d(f_0(s) s^{\frac{n-1}{2}}) \\
&\lesssim& \int_0 ^1 r^{n-1} f_0(r) dr+ \int_1^\infty d(f_0(s)s^{\frac{n-1}{2}}) \int_1 ^s J_{\frac{n}{2}-1}(r) r^{\frac{1}{2}}
\\
&\lesssim&\int_0 ^1 r^{n-1} f_0(r) dr + f_0(1) \lesssim \int_0 ^1 f_0(r) r^{n-1} dr,
   \end{eqnarray*}
where we have used the following estimate:
given $\alpha\ge -\frac12$, there holds
$$\sup_x \left|\int_0 ^x t^{\frac{1}{2}} J_\alpha(t) dt\right|<\infty.
$$
   Indeed, integrating by parts and using property (1), we have
      \begin{eqnarray*}\int_0 ^x t^{\frac{1}{2}} J_\alpha (t) dt &=& \int_0 ^x t^{\alpha+1} J_\alpha(t) t^{-\frac{1}{2} - \alpha} dt
      = c(\alpha)\int_0 ^x t^{\alpha+1} J_\alpha(t) \int_t^x u^{-\alpha-\frac{3}{2}} du dt\\
      &+& x^{-\alpha-\frac{1}{2}}\int_0 ^x t^{\alpha+1} J_\alpha(t)dt=c(\alpha)\int_0 ^x u^{-\alpha - \frac{3}{2}} u ^{\alpha+1}J_{\alpha+1}(u) du+ x^{\frac{1}{2}}J_{\alpha+1}(x).\end{eqnarray*}
The second term is bounded because of property (2).
For the first one, a further integration by parts yields
   \begin{eqnarray*}\int_0 ^x t^{-\frac{1}{2}} J_{\alpha+1} (t) dt
   &=& c(\alpha)\int_0 ^x t^{\alpha+2} J_{\alpha+1}(t) \int_t^x u^{-\alpha-\frac{7}{2}} du dt+ x^{-\alpha-\frac{5}{2}}\int_0 ^x t^{\alpha+2} J_{\alpha+1}(t)dt\\
   &=&c(\alpha)\int_0 ^x u^{-\frac{3}{2} }J_{\alpha+2}(u) du+ x^{-\frac{1}{2}} J_{\alpha+2}(x),
   \end{eqnarray*}
   which is bounded because of property (3) for small $u$ and (2)  for large $u$.

\end{proof}
Now we are in a position to characterize weighted Fourier inequalities
in the classical $L_p$ spaces for  radial decreasing functions.
\begin{theorem}\label{main}
Let $n\in \N$ and $1\le p< \infty$.
Let $v$ be a positive weight function and $\frac{n-1}{2} \leq \alpha \leq n-1$.
The inequality
\begin{equation}\label{wfi}
\int_{\mathbb{R}^n} |\h{f}(\xi) |^p |\xi|^{n(p-2)} v\big(1/{|\xi|}\big) \,d\xi
 \lesssim
 \int_{\mathbb{R}^n} |{f}(x) |^p v(|x|) \,dx
\end{equation} holds for any  radial non-negative function $f$ such that $f_0(x) x^{\alpha}$ is non-increasing  with limit zero if and only if, for any positive $r,$
\begin{equation}\label{bp}
    r^{p(n-\alpha)}\int_{|x|\geq r} |x|^{-np} v(|x|) d x  \lesssim \int_{|x|\leq r} v(x) |x|^{-\alpha p} dx.
\end{equation}
\end{theorem}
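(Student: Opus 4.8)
The plan is to prove the two implications separately, mirroring the structure of the variable-exponent Hardy--Littlewood theorem established above: for the ``if'' direction I will estimate $\widehat f$ by Theorem~\ref{estimate} and reduce \eqref{wfi} to a one-dimensional Hardy inequality on the cone of non-increasing functions, and for the ``only if'' direction I will test \eqref{wfi} against a single explicit function. The hypotheses are tailored to make Theorem~\ref{estimate} applicable: writing $r^{\frac{n-1}{2}}f_0(r)=r^{\frac{n-1}{2}-\alpha}\bigl(r^\alpha f_0(r)\bigr)$ and using $\alpha\ge\frac{n-1}{2}$, we see that $r^{\frac{n-1}{2}}f_0(r)$ is a product of two non-negative non-increasing factors, hence non-increasing with limit zero, so the bound \eqref{bound for fourier} is available.

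For sufficiency, I would pass to polar coordinates, insert \eqref{bound for fourier}, and substitute $\sigma\mapsto1/\sigma$ to obtain
\[
J:=\int_{\R^n}|\widehat f(\xi)|^p|\xi|^{n(p-2)}v(1/|\xi|)\,d\xi
\lesssim
\int_0^\infty \sigma^{n-1-np}\,v(\sigma)\Bigl(\int_0^\sigma r^{n-1-\alpha}g(r)\,dr\Bigr)^p d\sigma,
\]
where $g(r)=r^\alpha f_0(r)$ is non-increasing by hypothesis. The key device is the change of variables $s=r^{n-\alpha}$, admissible because $\alpha\le n-1$ forces $n-\alpha>0$; it collapses the weighted kernel $r^{n-1-\alpha}$ to the plain Hardy average, $\int_0^\sigma r^{n-1-\alpha}g(r)\,dr=\frac{1}{n-\alpha}\int_0^{\sigma^{n-\alpha}}G(s)\,ds$ with $G(s)=g(s^{1/(n-\alpha)})$ still non-increasing. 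Setting $\tau=\sigma^{n-\alpha}$ and tracking the exponents, both sides of \eqref{wfi} are transformed, up to constants, into
\[
J\lesssim\int_0^\infty W(\tau)\bigl(H(G)(\tau)\bigr)^p d\tau,
\qquad
\int_{\R^n}|f|^p v\asymp\int_0^\infty W(\tau)\,G(\tau)^p d\tau,
\]
with the \emph{same} weight $W(\tau)=\tau^{\alpha(1-p)/(n-\alpha)}v(\tau^{1/(n-\alpha)})$ and $H(h)(x)=\frac1x\int_0^x h(t)\,dt$ the Hardy operator. Sufficiency therefore reduces to the one-weight estimate $\int_0^\infty W(Hg)^p\lesssim\int_0^\infty W g^p$ restricted to non-increasing $g$.

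This restricted Hardy inequality is equivalent to the Ariño--Muckenhoupt condition $\int_\rho^\infty \tau^{-p}W(\tau)\,d\tau\lesssim \rho^{-p}\int_0^\rho W(\tau)\,d\tau$, which is the constant-exponent instance of the criterion in \cite[Theorem~3]{neu}. Undoing $\tau=\sigma^{n-\alpha}$ and returning to $\R^n$ is a direct exponent computation that I expect to reproduce \eqref{bp} precisely, the powers $n-1-np$ and $n-1-\alpha p$ surfacing on the two sides to give $\int_{|x|\ge r}|x|^{-np}v$ and $r^{-p(n-\alpha)}\int_{|x|\le r}|x|^{-\alpha p}v$. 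For the necessity I cannot simply reverse this, since \eqref{bound for fourier} is only an upper bound; instead I would test \eqref{wfi} with $f_0(x)=|x|^{-\alpha}\chi_{[0,r]}(|x|)$, for which $r^\alpha f_0=\chi_{[0,r]}$ satisfies the monotonicity hypothesis. Using the low-frequency lower bound $\widehat f(\xi)\gtrsim r^{n-\alpha}$, valid for $|\xi|\lesssim r^{-1}$, the left-hand side of \eqref{wfi} is $\gtrsim r^{p(n-\alpha)}\int_{|x|\gtrsim r}|x|^{-np}v$, while its right-hand side equals $\int_{|x|\le r}|x|^{-\alpha p}v$; this is \eqref{bp} after absorbing the fixed constant into the radius.

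The step I expect to be the main obstacle is the sufficiency reduction---specifically, identifying the substitution $s=r^{n-\alpha}$ that at once normalizes the weighted Hardy kernel to the plain average \emph{and} produces the identical weight $W$ on both sides, so that the one-weight theorem of \cite{neu} applies verbatim. Once this is in place the remainder is exponent bookkeeping, and the two constraints $\frac{n-1}{2}\le\alpha\le n-1$ play exactly the roles needed: $\alpha\ge\frac{n-1}{2}$ to invoke Theorem~\ref{estimate}, and $\alpha\le n-1$ to keep the substitution well defined.
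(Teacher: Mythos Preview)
Your proposal is correct, and the necessity argument is the same as the paper's. For sufficiency, however, you take a genuinely different route. The paper leaves the inner integral as a \emph{three-weight} Hardy inequality $\int w\,(H(gu))^p\lesssim\int g^p\eta$ with $w(t)=t^{-np+p+n-1}v(t)$, $u(t)=t^{n-1-\alpha}$, $\eta(t)=t^{-\alpha p+n-1}v(t)$, and then invokes the reduction theorem of Gogatishvili--Stepanov \cite[Theorem~2.5]{goga}; this produces two conditions, the second of which is verified through a reverse Minkowski inequality, and the case $p=1$ is handled separately. Your power substitution $s=r^{n-\alpha}$ collapses the kernel $u$ and simultaneously equalizes the two outer weights, so you land in the \emph{one-weight} setting where the classical Ari\~no--Muckenhoupt $B_p$ condition (the constant-exponent case of \cite[Theorem~3]{neu}) characterizes the inequality directly. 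Your approach is shorter and more elementary, avoids the reverse Minkowski step, and treats $p=1$ uniformly; the paper's approach, on the other hand, does not rely on the exponent $n-1-\alpha$ being non-negative and so would adapt more readily to kernels that are not pure powers.
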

\begin{remark}

\begin{itemize}

\item[($i$)]
Condition \eqref{bp} is monotone with respect to $ \alpha,$ that is, the class of $v$ satisfying \eqref{bp} becomes wider as $ \alpha$ increases.
Thus, Theorem \ref{main} provides a natural balance between the monotonicity  condition on $f$ and the condition on the weight $v.$
For $\alpha=n-1$ Theorem \ref{main} was obtained in \cite{car}.
  \item[($ii$)] Sufficient conditions on weights so that the weighted Fourier inequality \eqref{wfi} holds are usually given in terms of the rearrangement of the weight $v$, cf. \cite{be, he, sin, decarli}. The monotonicity of $f_0$ allows us to obtain necessary and sufficient condition for  \eqref{wfi} in terms of $v$ itself.
\end{itemize}
\end{remark}
\begin{proof}
The proof of the necessity of condition \eqref{bp}  follows by considering
   $f(x)={\chi}(|x|)_{[0,r]} |x|^{-\alpha}$. Then we have  $\widehat{f}(\xi)\gtrsim r^{n-\alpha}$
    for any $|\xi| \leq \frac{\pi}{3r} $.
    Hence,
    $$  r^{p(n-\alpha)}\int_{|\xi|\geq \frac{3}{\pi}r} |\xi|^{-np} v(|\xi|) d \xi
    =
    r^{p(n-\alpha)}\int_{|\xi|\leq \frac{\pi}{3} r^{-1}} |\xi|^{n(p-2)} v(|\xi|^{-1}) d \xi
    \lesssim \int_{|x|\leq r} v(x) |x|^{-\alpha p} dx.$$

{To prove the sufficiency,}
    put $g(t)= f_0(t)t^{\alpha}$ and recall that by hypothesis $g$ is non-increasing. Then,
    Theorem \ref{estimate} yields
     $$
I:=\int_{\mathbb{R}^n} |\h{f}(\xi) |^p |\xi|^{n(p-2)} v(|\xi|^{-1}) \,d\xi \lesssim \int_0 ^\infty \xi ^{n(p-2)+n-1} v(\xi^{-1}) \left(\int_0 ^{1/\xi} g(t) t^{n-1- \alpha} dt  \right)^p d \xi.$$
A change of variables implies that
\begin{eqnarray*}
I
\lesssim
\int_0 ^\infty \xi^{-np+n-1} v(\xi) \left(\int_0 ^{\xi} g(t) t^{n-1- \alpha} dt  \right)^p d \xi
&\lesssim&\int_0 ^\infty g(t)^p t^{-\alpha p + n -1} v(t) dt\\&\asymp&\int_{\mathbb{R}^n} |{f}(x) |^p v(|x|) \,dx,
\end{eqnarray*}
provided that
the following
three weight Hardy inequality
\begin{eqnarray}\label{hardy}
\int_0 ^\infty w(t) 
\Big(H \big(g(t) u(t)\big)  \Big)^p d t
\lesssim \int_0 ^\infty g(t)^p \eta(t) dt,
\end{eqnarray}
with $w(t)=t^{-np+p+n-1}v(t)$,
$\eta(t)=t^{-\alpha p + n -1} v(t)$, and $u(t)=t^{n-1- \alpha}$
holds for any  non-increasing $g$.

The characterization of \eqref{hardy} is known
and, for $p>1$,
according to \cite[Theorem 2.5]{goga}, we need to examine  that,
for any positive $r$, there holds
$$\int_0 ^r
\Big(\int_0 ^t u(s)ds\Big)^p\frac{w(t)}{t^p}dt\lesssim \int_0 ^r \eta(t) dt$$
and
\begin{eqnarray}\label{hardy1} \left(\int_r ^\infty \frac{w(t)}{t^p}dt \right)^{\frac{1}{p}}
\left(\int_0^r \eta(t) \Big(\frac{\int_0^tu(s)ds} {\int_0^t\eta(s)ds} \Big)^{p'}
\right)^{\frac{1}{p'}} \lesssim 1.\end{eqnarray}
The first condition holds since the integrals on the left-hand and right-hand sides  are equivalent.
Regarding the second condition,
observe that, setting $\Delta(t)=\int_0^t\eta(s)ds$,
\begin{eqnarray*}
\int_0 ^r
\Big({\int_0^tu(s)ds}  \Big)^{p'}\frac{\eta(t)}{\Delta^{p'}(t)}
  dt&\asymp& \int_0 ^r  \Big( \int_0 ^t x^{p'(n-\alpha)-1} dx\Big) \frac{\eta(t)}{\Delta^{p'}(t)}  dt \\&=& \int_0 ^r  \Big( \int_x^r\frac{\eta(t)}{\Delta^{p'}(t)}dt\Big) x^{p'(n-\alpha)-1} dx\\
  &\lesssim& \int_0 ^r
{\Delta^{1-p'}(x)}
  x^{p'(n-\alpha)-1} dx.
  \end{eqnarray*}
Further, by our assumption, we have
$$\int_r ^\infty \frac{w(t)}{t^p}dt  =\int_r ^\infty \xi^{-np+n-1} v(\xi) d \xi \lesssim r^{p(\alpha -n)} \int_{0}^r \eta(t) dt
= r^{p(\alpha-n)} \Delta(r).
$$
Therefore, to prove \eqref{hardy1}, it suffices to show that
$$
r^{p(\alpha-n)} \int_0 ^r \eta(t) dt \lesssim \left(\int_0 ^r   \Delta^{1-p'}(x)  x^{p'(n-\alpha)-1} dx\right)^{\frac{1}{1-p'}}.
$$
Now, since $1-p'<0$, the reverse Minkowski inequality (see, e.g., \cite[p. 189]{Bullen}) yields
\begin{eqnarray*}
\left(\int_0 ^r   \Delta^{1-p'}(x) x^{p'(n-\alpha)-1} dx\right)^{\frac{1}{1-p'}}&=&\left( \int_0 ^r  \left(\int_0^x \eta(t) x^{\frac{p'(n-\alpha)-1}{1-p'}} dt \right)^{1-p'}dx\right)^{\frac{1}{1-p'}}
\\
&\geq& \int_0 ^r  \eta (t)\left(\int_t ^r x^{p'(n-\alpha)-1} dx \right)^{\frac{1}{1-p'}} dt\\
&\gtrsim&
\left(r^{p'(n-\alpha)}\right)^{\frac{1}{1-p'}}
\int_0 ^r  \eta(t) dt,
\end{eqnarray*}and the result follows.
In the case $p=1$ the proof is similar with the help of  \cite[Theorem 2.5(d)]{goga}.

\end{proof}

Let us compare our result with the classical Pitt inequality: for $1<  p< \infty$, one has
\begin{eqnarray}\label{pitt}
\int_{\mathbb{R}^n} |\h{f}(\xi) |^p |\xi|^{n(p-2)-\gamma p} \,d\xi
 \lesssim
 \int_{\mathbb{R}^n} |{f}(x) |^p |x|^{\gamma p} \,dx
\end{eqnarray}
for any $f$ provided that $\max(0, n(1-\frac2p))\le\gamma<\frac{n}{p'}$, see e.g. \cite{be}. For $1<p\le 2$ and $\gamma=0$, \eqref{pitt} coincides with  the
Hardy--Littlewood inequality (\ref{hl inequality}).
\begin{corollary}\label{cor-last}
Let $1\le p< \infty$.
    Assume that $f_0(r) r^{\frac{n-1}{2}}$ is non-increasing with limit zero.
Then (\ref{pitt}) holds for any  $\gamma<\frac{n}{p'}$.

\end{corollary}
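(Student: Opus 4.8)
The plan is to recognize the classical Pitt inequality \eqref{pitt} as the special case of the weighted Fourier inequality \eqref{wfi} obtained by choosing the power weight $v(t)=t^{\gamma p}$. Indeed, with this choice the left-hand side of \eqref{wfi} carries the weight $v(1/|\xi|)=|\xi|^{-\gamma p}$, so that $|\xi|^{n(p-2)}v(1/|\xi|)=|\xi|^{n(p-2)-\gamma p}$, while the right-hand side carries $v(|x|)=|x|^{\gamma p}$; hence \eqref{wfi} becomes precisely \eqref{pitt}. Since the hypothesis of the corollary is that $f_0(r)r^{(n-1)/2}$ is non-increasing with limit zero, I would apply Theorem \ref{main} with the endpoint value $\alpha=\frac{n-1}{2}$, which is admissible because it lies in the allowed range $\frac{n-1}{2}\le\alpha\le n-1$. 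It then remains only to verify that the weight $v(t)=t^{\gamma p}$ satisfies the balance condition \eqref{bp} for every $\gamma<\frac{n}{p'}$.

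The verification of \eqref{bp} reduces to comparing two elementary power integrals, over a ball and over its complement. Passing to polar coordinates, the left-hand side of \eqref{bp} is
\[
r^{p(n-\alpha)}\int_{|x|\ge r}|x|^{(\gamma-n)p}\,dx\asymp r^{p(n-\alpha)}\int_r^\infty s^{(\gamma-n)p+n-1}\,ds.
\]
The integral at infinity converges exactly when $(\gamma-n)p+n<0$, i.e. when $\gamma<\frac{n}{p'}$, and in that case equals a constant multiple of $r^{(\gamma-n)p+n}$; multiplying by the prefactor $r^{p(n-\alpha)}$ gives the order $r^{(\gamma-\alpha)p+n}$. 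The right-hand side of \eqref{bp} is
\[
\int_{|x|\le r}|x|^{(\gamma-\alpha)p}\,dx\asymp\int_0^r s^{(\gamma-\alpha)p+n-1}\,ds,
\]
so whenever $(\gamma-\alpha)p+n>0$ this is also a constant multiple of $r^{(\gamma-\alpha)p+n}$, and the two sides of \eqref{bp} have the same order in $r$, whence \eqref{bp} holds. If instead $(\gamma-\alpha)p+n\le 0$, the right-hand integral diverges at the origin, so the right-hand side of \eqref{bp} is infinite while its left-hand side stays finite (as $\gamma<\frac{n}{p'}$), and \eqref{bp} holds trivially.

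I do not anticipate a genuine obstacle; the only point requiring care is the bookkeeping of exponents and the role of the constraint $\gamma<\frac{n}{p'}$, which is exactly what guarantees the convergence of the integral over $|x|\ge r$ on the left of \eqref{bp}. The lower restriction $\gamma\ge\max(0,n(1-\tfrac2p))$ present in the classical statement is not needed here because the monotonicity of $f_0(r)r^{(n-1)/2}$ has already been exploited, through Theorem \ref{main} and, beneath it, the pointwise bound of Theorem \ref{estimate}, to control $\widehat f$. For $p=1$ one invokes the corresponding case of Theorem \ref{main}, and the admissible range reads $\gamma<0$. Once \eqref{wfi} is matched to \eqref{pitt}, the corollary is a direct application of Theorem \ref{main} together with the computation above.
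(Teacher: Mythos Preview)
Your proposal is correct and is exactly the intended route: the paper states the corollary as an immediate consequence of Theorem~\ref{main} with $\alpha=\tfrac{n-1}{2}$ and $v(t)=t^{\gamma p}$, without spelling out the verification of \eqref{bp}, and your exponent bookkeeping does precisely that verification.
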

This result was known for functions $f$ such that
$f_0(r) r^{n-1}$ is non-increasing (see \cite{car}).
Combining various conditions on $f$, we arrive at the following result:
\begin{corollary}\label{cor-last-} Let $1<p<\infty$.
 Inequality
(\ref{pitt})
 holds for any  $f\in X$
if and only if

\begin{itemize}
     \item[($i$)] $\max(0, n(1-\frac2p))\le\gamma<\frac{n}{p'}$ \quad if  \;$X=L_1(\mathbb{R}^n)$, see (\ref{pitt}); 

    \item[($ii$)]  $(n-1)(\frac{1}2- \frac{1}p)+\max(0, 1-\frac2p)\le\gamma<\frac{n}{p'}$ \quad if \; $X$ is the space of radial, integrable functions, see \cite[Th.1.1]{jmaa};

            \item[($iii$)]
            $\frac{n-1}2-\frac np<\gamma<\frac{n}{p'}$ \quad if \; $X$ is the space of radial integrable functions $f$ for which
          $f(x)=f_0(|x|)$ with $f_0\in GM$, that is,
$f_0$ is locally integrable on $\R_+$ and,
for some $\gamma>1$, the inequality
$$\var_{(x,2x)}{f_0}\lesssim \int_{x/\gamma}^{\gamma x}\frac{|f_0(t)|}{t}dt,
\qquad x>0,
$$         holds,
         see \cite{jam};

                 \item[($iv$)] $\gamma<\frac{n}{p'}$  \quad if \; $X$ is the space of radial functions for which
          $f(x)=f_0(|x|)$, with $f_0(r) r^{\frac{n-1}{2}}$ monotonically decreasing on $\R_+$ (see Corollary \ref{cor-last}).

   \end{itemize}
\end{corollary}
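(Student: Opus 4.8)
The plan is to read each of the four cases as the specialization of a weighted Fourier inequality to the power weight $v(|x|)=|x|^{\gamma p}$, for which (\ref{pitt}) is literally (\ref{wfi}). The four function classes shrink from ($i$) to ($iv$), and as the class shrinks the admissible range of $\gamma$ widens; accordingly each line is an \emph{if and only if} statement inherited from an already available characterization. For $X=L_1(\R^n)$, case ($i$) is the classical Pitt inequality together with the standard sharpness of its range $\max(0,n(1-2/p))\le\gamma<n/p'$, which I would cite from \cite{be}. For the radial integrable class in ($ii$) and the $GM$ class in ($iii$), I would invoke the characterizations of \cite[Th.~1.1]{jmaa} and \cite{jam}, substitute $v(|x|)=|x|^{\gamma p}$, and note that the Muckenhoupt-type balance condition in each reduces, after passing to polar coordinates, to a comparison of two powers of $r$; the stated intervals for $\gamma$ are exactly the ranges in which that comparison holds.

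The one genuinely new case is ($iv$), and here I would argue directly from Theorem \ref{main} with $\alpha=\frac{n-1}{2}$. Since (\ref{pitt}) is (\ref{wfi}) for $v(t)=t^{\gamma p}$ and the hypothesis ``$f_0(r)r^{(n-1)/2}$ non-increasing with limit zero'' is exactly the hypothesis of Theorem \ref{main}, the inequality holds for every such $f$ if and only if (\ref{bp}) holds for all $r>0$. Substituting $v(t)=t^{\gamma p}$ and $\alpha=\frac{n-1}{2}$ and integrating in polar coordinates, the left-hand side of (\ref{bp}) becomes a constant multiple of $r^{p(n+1)/2}\int_r^\infty t^{-np+\gamma p+n-1}\,dt$, whose inner integral converges precisely when $\gamma<n/p'$, in which case it equals a constant multiple of $r^{\,n+\gamma p-p(n-1)/2}$. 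The right-hand side is a constant multiple of $\int_0^r t^{\gamma p-(n-1)p/2+n-1}\,dt$, which is infinite (so that (\ref{bp}) holds trivially) when $\gamma\le\frac{n-1}{2}-\frac np$ and otherwise equals a constant multiple of the \emph{same} power $r^{\,n+\gamma p-p(n-1)/2}$. Hence for every $\gamma<n/p'$ condition (\ref{bp}) holds, while for $\gamma\ge n/p'$ the left-hand side diverges against a finite right-hand side (finite since $\frac{n}{p'}>\frac{n-1}{2}-\frac np$) and (\ref{bp}) fails. This gives the equivalence in ($iv$), consistent with the sufficiency already recorded in Corollary \ref{cor-last}.

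The only delicate point is bookkeeping rather than analysis: I must make sure that the sharpness (the ``only if'' direction) in ($i$)–($iii$) is genuinely part of the cited characterizations, that the test functions realizing failure lie in the corresponding class $X$, and that the constraints $1<p<\infty$ and $\frac{n-1}{2}\le\alpha\le n-1$ are respected when specializing Theorem \ref{main} at $\alpha=\frac{n-1}{2}$. For ($iv$) there is no sharpness gap to track, because Theorem \ref{main} is itself an equivalence, so the computation above settles both implications at once; this is the step I would present in full, leaving ($i$)–($iii$) as citations.
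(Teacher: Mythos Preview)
Your proposal is correct and matches the paper's own treatment: the paper gives no proof for this corollary beyond the embedded citations for ($i$)--($iii$) and the reference to Corollary~\ref{cor-last} for ($iv$), so your plan to cite \cite{be}, \cite{jmaa}, \cite{jam} and then specialize Theorem~\ref{main} at $\alpha=\tfrac{n-1}{2}$ with $v(t)=t^{\gamma p}$ is exactly the intended argument. Your explicit verification of \eqref{bp} for the power weight, including the observation that the two sides carry the same power of $r$ and that divergence of the left integral at $\gamma\ge n/p'$ forces failure, in fact supplies the ``only if'' part of ($iv$) that Corollary~\ref{cor-last} alone (stated only as a sufficiency) does not make explicit.
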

It is easy to see the conditions on $\gamma$ become less restrictive from ($i$) to ($iv$).

\vspace{0.5cm}

\end{document}